 \numberwithin{equation}{section}
 \newtheorem{theorem}[equation]{Theorem}
 \newtheorem{lemma}[equation]{Lemma}
 \theoremstyle{definition}
 \newtheorem{definition}[equation]{Definition}
 \theoremstyle{remark}
 \newtheorem{remark}[equation]{Remark}
 \newtheorem{example}[equation]{Example}
 \newcommand{\nc}{\newcommand}
 \nc{\la}{\lambda}
 \nc{\La}{\Lambda}
 \nc{\pwl}{P^+}
 \nc{\fit}{\tilde{f}_i}
 \nc{\eit}{\tilde{e}_i}
 \nc{\veps}{\varepsilon}
 \nc{\vphi}{\varphi}
 \nc{\NM}{\mathcal{M}} 
 \nc{\CB}{B}
 \nc{\Bla}{\CB(\lambda)}
 \nc{\Bcup}{\CB(\cup)}
 \nc{\Binf}{\CB(\infty)}
 \nc{\Tb}{T}
 \nc{\Sla}{\Tb(\lambda)}
 \nc{\SML}{\Tb(\infty)}
 \nc{\Rt}{R}
 \nc{\Rla}{\Rt(\lambda)}
 \nc{\RML}{\Rt(\infty)}
 \nc{\RL}{\Rt^L}
 \nc{\Rcup}{\Rt(\cup)}
 \nc{\Mo}{M}
 \nc{\Mla}{\Mo(\lambda)}
 \nc{\No}{N}
 \nc{\Nla}{\No(\lambda)}
 \nc{\Tv}{\mathsf{T}}
 \nc{\Tvla}{\Tv_{\la}}
 \nc{\tvla}{\mathsf{t}_{\la}}
 \nc{\x}{\bf x}
 \nc{\hwvT}{T_{\infty}}
 \nc{\hwvR}{R_{\infty}}
 \nc{\ur}{r}
 \nc{\ut}{t}
 \nc{\bpl}{\bar{\pi}_{\lambda}}
 \nc{\bpm}{\bar{\pi}_{\mu}}
 \nc{\Z}{\mathbf{Z}}
 \nc{\bsim}{\accentset{\alpha}{\sim}}
 \nc{\msim}{\accentset{\beta}{\sim}}
 \DeclareMathOperator{\wt}{wt}
\begin{document}

\title
[Crystal $\Bla$ in $B(\infty)$ for $G_2$ type Lie Algebra]
{Crystal $\Bla$ in $B(\infty)$ for $G_2$ type Lie Algebra}

\author{Min Kyu Kim}
\address{%
  Department of Mathematics Education, Gyeongin National University of Education, 45 Gyodae-Gil, Gyeyang-gu, Incheon, 407-753, Republic of Korea}
\email{mkkim@kias.re.kr}

\author{Hyeonmi Lee}
\address{%
  Department of Mathematics and Research Institute for Natural Sciences, Hanyang University, Seoul 133, KOREA}
\email{hyeonmi@hanyang.ac.kr}

\thanks{%
  The first author was supported by GINUE research fund.
  The second author was supported by Basic Science Research Program through the National Research Foundation of Korea(NRF) funded by the Ministry of Education, Science and Technology (NRF-2009-0068934).}

\subjclass[2010]{17B37, 81R50}

\begin{abstract}
A previous work gave a combinatorial description of the crystal $B(\infty)$, in terms of certain simple Young tableaux referred to as the marginally large tableaux, for finite dimensional simple Lie algebras.
Using this result, we present an explicit description of the crystal~$B(\lambda)$, in terms of the marginally large tableaux, for the $G_2$ Lie algebra type.
We also provide a new description of $B(\la)$, in terms of Nakajima monomials, that is in natural correspondence with our tableau description.
\end{abstract}

\maketitle

\section{Introduction}

Quantum group $U_q(\mathfrak{g})$ is a $q$-deformation of the universal enveloping algebra~$U(\mathfrak{g})$ over a Lie algebra~$\mathfrak{g}$~\cite{De85,Ji85}, and the crystal base~$\Binf$ exposes the structure of the subalgebra $U_q^-(\mathfrak{g})$ of the quantum group~\cite{Ka91}.
In this work, we provide a new expression for the crystal base~$\Bla$ of the irreducible highest weight module of highest weight $\la$, using elements of $B(\infty)$, for the $G_2$ Lie algebra type.
We obtain an explicit realization of~$B(\la)$ by presenting it essentially as a specific subset of the marginally large tableaux realization of~$B(\infty)$.
During this process we develop and present another description of~$\Bla$ in terms of Nakajima monomials, which is in natural correspondence with our tableau description.

Our starting point is the strict embedding $\Omega_{\la}:B(\la)\hookrightarrow B(\infty)\otimes \Tvla$, introduced in~\cite{Naka}, where $\Tvla$ is a certain single-element crystal.
The existence of this embedding implies that the crystal $B(\infty)\otimes\mathsf{T}_{\lambda}$ contains a copy of the irreducible highest weight crystal $B(\lambda)$ as a sub-crystal.
After replacing the abstract object $B(\infty)$ with its more concrete realization $T(\infty)$~\cite{HoLe}, consisting of all marginally large tableaux, we carefully list the tableaux that should belong to the sub-crystal $B(\lambda)$, and claim the resulting set as a concrete realization for $B(\lambda)$.

Note that the tableau realization $T(\infty)$ we utilize was obtained by combining an interpretation of the crystal~$\Binf$ as a certain union of the highest weight crystals $B(\lambda)$'s~\cite{HoLe2,Ka02} and the tableau realization of $B(\la)$, for the $G_2$ Lie algebra type, given by Kang and Misra~\cite{KaMi}.
In all, we are obtaining a tableau realization of $B(\lambda)$ as a subset of a tableau realization for $B(\infty)$, which is a collection of elements taken from the tableau realizations of $B(\lambda)$'s.
However, our resulting tableau realization of $B(\lambda)$ is quite different from the initial tableau realization of $B(\lambda)$.
In fact, crystal $T(\infty)$ of marginally large tableaux is a collection of only the simplest tableaux appearing in the realizations for the $B(\lambda)$'s, with none of them involving the complicated, so called, distance conditions, so that our description of $B(\la)$ also contains only the simple tableaux.
Our marginally large tableau description of $B(\la)$ is almost as simple as the well-known semi-standard tableau realization of $B(\la)$ for the special linear Lie algebra type.

To verify that our list of tableaux, carefully selected from $T(\infty)\otimes \Tvla$, gives a realization of $B(\lambda)$, we return to the strict embedding $\Omega_{\la}: B(\la)\hookrightarrow B(\infty)\otimes \Tvla$ and show that our set is precisely the image of this map.
Once again, since dealing with the abstract object $B(\lambda)$ would be difficult, we develop a concrete realization $N(\la)$ of $B(\la)$, consisting of Nakajima monomials, so that the map $N(\la) \hookrightarrow T(\infty)\otimes \Tvla$ could be computed very explicitly.

Let us briefly discuss our monomial realization $N(\la)$ of~$B(\la)$.
Nakajima introduced a set of monomials~\cite{na03} on which Kashiwara gave a crystal structure and showed certain connected components of the crystal to be isomorphic to the crystals~$\Bla$~\cite{Ka03}.
Various descriptions of~$\Bla$ for the $G_2$ Lie algebra type based on this result have already appeared~\cite{jkks,sh}, but these were not suitable for our purpose of computing the map $N(\la) \hookrightarrow T(\infty)\otimes \Tvla$.
Our new monomial realization $N(\la)$ has the property that the distance of each element from the highest weight monomial, in terms of Kashiwara operator actions, is directly accessible from the expression of the element itself.
This property allows natural correspondences to be made between monomials and marginally large tableaux, so that the map $N(\la) \hookrightarrow T(\infty)\otimes \Tvla$ can be handled explicitly.

As one application of this work, we expect to be able to give a combinatorial description of the Casselman-Shalika formula for the $G_2$ type.
The works~\cite{LeSa2,LeSa3} introduced a combinatorial rule for expressing the Gindikin-Karpelevich formula associated to crystal $B(\infty)$ for the classical finite and $G_2$ Lie algebra types, using the set of marginally large tableaux.
The Casselman-Shalika formula is a companion formula to this formula, associated to the highest weight crystal $B(\la)$, and the work~\cite{LeSa1} also gave a corresponding combinatorial rule for the $A_n$ Lie algebra type, using the well-known semi-standard tableaux.
The collection of marginally large tableaux expressing the crystal $B(\la)$, introduced in the current work, should allow an analogous treatment for the $G_2$ case.

The rest of this paper is organized as follows.
In Section~\ref{sec:2}, we recall the basic theory of Nakajima monomials and the tableau description of~$B(\infty)$.
Our monomial description $N(\la)$ of $B(\la)$ is presented in Section~\ref{sec:3}.
The description of crystal~$\Bla$ in terms of marginally large tableaux, which is our main result, is obtained in the final section.
The section also contains the correspondence between our monomial and tableau descriptions.

\section{Preliminaries}\label{sec:2}

In this section, we recall the basics concerning the Nakajima monomials~\cite{Ka03} and the description of~$\Binf$ in terms of marginally large tableaux~\cite{HoLe}.
Even though there are more general results, our discussion in this work is restricted to the $G_2$ Lie algebra type.

We will use standard notation found in the textbook~\cite{HoKan} and assume knowledge of the related notions, such as the following: index set~$I=\{1,2\}$, simple root~$\alpha_i$, coroot~$h_i$, fundamental weight~$\La_i$, set of dominant integral weights~$\pwl$, Cartan matrix $(\alpha_i(h_j))_{i,j\in I}$ with $\alpha_2(h_1)=-3$ and $\alpha_1(h_2)=-1$, quantum group~$U_q(G_2)$, abstract crystal with associated Kashiwara operators $\eit$, $\fit$ and maps~$\wt$, $\veps_i$, $\vphi_i$, irreducible highest weight crystal~$\Bla$, tensor product rule, negative part~$U_q^-(G_2)$ of~$U_q(G_2)$, and crystal basis~$\Binf$ of~$U_q^-(G_2)$.

\subsection{Nakajima monomials and crystal $\Bla$}\label{sec:2-1}

Let us recall the basics of the {Nakajima monomial} theory from~\cite{Ka03,na03}.
The set of \emph{Nakajima monomials} in the variables $Y_i(m)$, where $i\in I=\{1,2\}$ and $m\in\Z$, is denoted by $\NM$.
Each monomial is of the form $\prod_{(i,m)} {Y_i(m)}^{y_i(m)}$, with nonzero exponent $y_i(m)\in\Z$ appearing at only finitely many $(i,m)\in I\times\Z$.

The crystal structure on the set $\NM$, as defined by Kashiwara~\cite{Ka03}, is given below.
For each $m\in\Z$, fix the notation
\begin{align*}
U_1(m) &={Y_1(m)}{Y_1(m+1)}{Y_2(m)}^{-1},\\
U_2(m) &={Y_2(m)}{Y_2(m+1)}{Y_1(m+1)}^{-3}.
\end{align*}
For every monomial $N=\prod_{(i,m)}{Y_i(m)}^{y_i(m)}\in\NM$ and $i\in I$, we set
\begin{align*}
\wt(N) &= \sum_{i\in I}\big(\sum_{m\in\Z} y_i(m)\big)\La_i,\\
\vphi_i(N) &= \max\big\{\sum_{k\leqslant m} y_i(k) \,\vert\, m\in\Z\big\},\\
\veps_i(N) &= \max\big\{-\sum_{k>m} y_i(k) \,\vert\, m\in\Z\big\}.
\end{align*}
To prepare for the definition of the Kashiwara operator applications, we introduce the values
\begin{align}\label{mfme}
m_f(N,i) &= \min\big\{m \mid \vphi_i(N) = \sum_{k\leqslant m} y_i(k) \big\},\\
m_e(N,i) &= \max\big\{m \mid \veps_i(N) = -\sum_{k>m} y_i(k) \big\}.
\end{align}
The Kashiwara operator actions are given by
\begin{align*}
\fit(N)&=
\begin{cases}
0    &\text{for $\vphi_i(N)=0$,}\\
U_i(m_f(N,i))^{-1}N    &\text{for $\vphi_i(N)>0$,}
\end{cases}\\
\eit(N)&=
\begin{cases}
0    &\text{for $\veps_i(N)=0$,}\\
U_i(m_e(N,i))N     &\text{for $\veps_i(N)>0$.}
\end{cases}
\end{align*}

The following theorem from~\cite{Ka03} gives a realization of the irreducible highest weight crystal.

\begin{theorem}\label{nakakash}
For a maximal vector $N\in\NM$, the connected component of the crystal~$\NM$ containing~$N$ is isomorphic to $\CB(\wt(N))$.
\end{theorem}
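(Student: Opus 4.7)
The plan is to prove the theorem via the standard recognition argument for the highest weight crystal $\Bla$, after establishing the crystal axioms on $\NM$. First, I would verify that $\NM$, equipped with the given $\wt$, $\veps_i$, $\vphi_i$, $\eit$, $\fit$, is an abstract crystal. The identity $\vphi_i(N) - \veps_i(N) = \wt(N)(h_i)$ follows from comparing $\sum_{k\leqslant m} y_i(k)$ at $m$ very large and very small. The weight shifts $\wt(\fit N) = \wt(N) - \alpha_i$ and $\wt(\eit N) = \wt(N) + \alpha_i$ hold because each $U_i(m)$ has weight $\alpha_i$, and the corresponding shifts in $\veps_i$ and $\vphi_i$ reduce to routine bookkeeping with the partial sums near the indices $m_f(N,i)$ and $m_e(N,i)$. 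The mutual-inverse property $\eit \fit N = N$ (when $\fit N \neq 0$) follows from the $\min$/$\max$ choices in~\eqref{mfme}, which force $m_e(\fit N, i) = m_f(N, i)$.

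Second, since $N$ is maximal, $\veps_i(N) = 0$ for all $i \in I$, so by the identity above $\vphi_i(N) = \wt(N)(h_i) \geqslant 0$, and hence $\la := \wt(N) \in \pwl$. Write $C(N)$ for the connected component of $\NM$ containing $N$. By construction $C(N)$ is the $\eit, \fit$-orbit of $N$, so I would attempt to build an isomorphism $\pi \colon \Bla \to C(N)$ with $b_\la \mapsto N$, using the fact that in any seminormal crystal with a highest weight vector, every element is reachable from the highest weight vector by a sequence of $\fit$'s. Surjectivity is then immediate once $\pi$ is well defined, which in turn follows from the matching of $i$-string lengths: $\veps_i$ and $\vphi_i$ are finite on every monomial and agree with the string endpoints forced by the Kashiwara operators.

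The main obstacle will be injectivity of $\pi$, i.e., ruling out unexpected identifications in $C(N)$. A clean route is to exploit the tensor-product compatibility of the monomial product in $\NM$: each element of $C(N)$ is $N$ multiplied by a finite word in the factors $U_i(m)^{\pm 1}$, and by grouping these factors appropriately one obtains a crystal embedding $C(N) \hookrightarrow \Binf \otimes \Tvla$ sending $N$ to the distinguished highest weight element. Comparing this with the strict embedding $\Omega_\la \colon \Bla \hookrightarrow \Binf \otimes \Tvla$ recalled in the introduction, both $C(N)$ and $\Bla$ land in the same connected component of weight $\la$ inside $\Binf \otimes \Tvla$, and uniqueness of this connected component forces $\pi$ to be an isomorphism, completing the argument.
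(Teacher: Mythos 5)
First, a point of orientation: the paper does not prove this statement at all; it is quoted verbatim from Kashiwara's \emph{Realizations of crystals} [Ka03], so there is no in-paper proof to compare against. Judged on its own terms, your proposal has a genuine gap at its core. The plan is a ``standard recognition argument'': check that $\NM$ is a seminormal crystal, note that $\la=\wt(N)$ is dominant, and then build an isomorphism $\pi\colon\Bla\to C(N)$ using reachability from the maximal vector and ``matching of $i$-string lengths.'' But seminormality plus a unique maximal vector of dominant weight does \emph{not} characterize $\Bla$: there exist connected seminormal crystals with a single maximal element that are isomorphic to no highest weight crystal (this is exactly why Stembridge-type local axioms, polarizations, or strict-embedding criteria were invented, and why Kashiwara's theorem has real content). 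Consequently the well-definedness of $\pi$ --- that two $\fit$-words giving the same element of $\Bla$ give the same monomial, and vice versa --- is the entire difficulty, and ``matching of $i$-string lengths'' does not deliver it. Even the preliminary reachability claim (every element of $C(N)$ is obtained from $N$ by $\fit$'s) needs an argument: it requires both termination of ascending $\eit$-chains and uniqueness of the maximal vector in $C(N)$, neither of which you establish for a general maximal $N\in\NM$ (the paper itself has to work to prove the analogous uniqueness statement for its specific component $\Nla$ in Theorem~\ref{prop:3}).

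Your proposed repair --- ``grouping the factors $U_i(m)^{\pm1}$ appropriately one obtains a crystal embedding $C(N)\hookrightarrow\Binf\otimes\Tvla$'' --- is not a construction. Elements of $\Binf$ are not monomials, and exhibiting a strict morphism from $C(N)$ into $\Binf\otimes\Tvla$ sending $N$ to $b_\infty\otimes\tvla$ is essentially equivalent in difficulty to the theorem itself; in the context of this paper it is also circular, since Section~\ref{sec:4} computes precisely such an embedding for $\Nla$ \emph{after} invoking Theorem~\ref{nakakash} to know $\Nla\cong\Bla$. You also silently use that the image of $\Omega_\la$ is the full connected component of $b_\infty\otimes\tvla$, which is itself a nontrivial theorem quoted from the literature (fine to cite, but it does not rescue the missing embedding). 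The final bookkeeping step --- a strict embedding of a connected crystal into $\Bla$ hitting $b_\la$ must be onto --- is correct, but everything feeding into it is unproved. To actually prove the statement one needs a genuine recognition input, e.g.\ Kashiwara's argument embedding the monomial crystal into explicit tensor products of known crystals, or a verification of Stembridge/polarization-type criteria; none of these is supplied or replaceable by the soft axioms you invoke.
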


Kashiwara~\cite{Ka03} gave multiple crystal structures on the set~$\NM$ and the above realization theorem holds true for each of these crystal structures, but we will deal only with the crystal structure described above.

\subsection{Tableau description of crystal $B(\infty)$}\label{sec:2-2}

A description of $\Binf$, given in terms of marginally large tableaux, was presented by~\cite{HoLe}.
This was a subset of the union of tableau descriptions for the crystals $\Bla$, which were given by~\cite{KaMi}.
Our new presentation of $\Bla$ will be based on this marginally large tableau description of~$\Binf$.
The rest of this subsection presents material from~\cite{HoLe}.

For $\la\in P^+$, we denote by $T(\la)$ the set of tableaux given in~\cite{KaMi}, which is a description for~$B(\la)$.
The set of alphabets to be used inside the boxes constituting a tableau of $T(\la)$ will be denoted by $J$, and it will be equipped with an ordering $\prec$, as given
in~\cite{KaMi}:
\begin{equation*}
J=\{1\prec{2}\prec{3}\prec{0}\prec\bar{3}\prec\bar{2}\prec\bar{1}\}.
\end{equation*}

Marginally large tableaux, recalled in the next definition, are among the simplest tableaux appearing in the tableau realization $T(\la)$, and are almost as simple as the tableaux for the special linear Lie algebra type.
The top row of a tableau will be referred to as its \emph{first} row throughout this work.

\begin{definition}
For $i\in I$, a \emph{basic $i$-column} is a single column of $i$-many boxes, with the box at its $k$-th row containing the entry~$k$, for each $1\leqslant k\leqslant i$.
A semi-standard tableau (w.r.t. $\prec$) consisting of $2$ rows with entries from $J$ is \emph{marginally large}, if
\begin{enumerate}
\item it contains exactly one basic $i$-column, for each $i\in I$,
\item only $2$ and $3$ appear as box entries in the second row, and
\item $0$ appears at most once as an entry in the first row.
\end{enumerate}
The set of all marginally large tableaux is denoted by~$\SML$.
\end{definition}

\begin{remark}
It may be inferred from the definition that the number of $1$-boxes in the first row (of any marginally large tableau) is larger than the total number of boxes appearing in the second row by exactly one.
Furthermore, the second row should contain exactly one $2$-box.
\end{remark}

\begin{remark}
Alternatively, one could define a marginally large tableau to be an element of~$T(\la)$, for some $\la\in P^+$, that contains exactly one basic $i$-column, for each $i\in I$.
Since the tableau description $T(\la)$ for the $G_2$ type is a well-known result~\cite{KaMi}, we refer readers to the original papers and shall not repeat the complete definition here.
\end{remark}

The set $\SML$ consists of all tableaux of the following form.
\begin{equation*}
T = \raisebox{-0.5\height}{\ %
\begin{texdraw}
 \fontsize{6}{6}\selectfont
 \textref h:C v:C
 \drawdim em
 \setunitscale 1.35
 \move(14.5 0)\lvec(1 0)\lvec(1 -1)\lvec(14.5 -1)\lvec(14.5 0)
 \ifill f:0.8
 \move(0 0)\lvec(-2.5 0)\lvec(-2.5 -2)\lvec(0 -2)\lvec(0 0)
 \ifill f:0.8
 \move(14.5 0)\lvec(-3.5 0)
 \move(14.5 -1)\lvec(-3.5 -1)
 \move(0 -2)\lvec(-3.5 -2)
 \move(-3.5 0)\lvec(-3.5 -2)
 \htext(-3 -0.5){$1$}
 \htext(-0.55 -0.5){$1$}
 \htext(-1.7 -0.5){$\cdots$}
 \move(0 0)\rlvec(0 -2)
 \htext(0.5 -0.5){$1$}
 \move(1 0)\rlvec(0 -1)
 \htext(2.25 -0.5){$2\!\cdots\!2$}
 \move(3.5 0)\rlvec(0 -1)
 \htext(4.75 -0.5){$3\!\cdots\!3$}
 \move(6 0)\rlvec(0 -1)
 \htext(6.5 -0.5){$0$}
 \move(7 0)\rlvec(0 -1)
 \htext(8.25 -0.5){$\bar{3}\!\cdots\!\bar{3}$}
 \move(9.5 0)\rlvec(0 -1)
 \htext(10.75 -0.5){$\bar{2}\!\cdots\!\bar{2}$}
 \move(12 0)\rlvec(0 -1)
 \htext(13.25 -0.5){$\bar{1}\!\cdots\!\bar{1}$}
 \move(14.5 0)\rlvec(0 -1)
 \htext(-1.25 -1.5){$3\!\cdots\!3$}
 \move(-2.5 -0.9)\rlvec(0 -1.1)
 \htext(-3 -1.5){$2$}
\end{texdraw}%
}
\end{equation*}
The non-shaded parts, which are basic columns, must exist, whereas the shaded parts are optional and, except for the $0$-box, can be of arbitrary widths.
The simplest marginally large tableau is
\begin{equation*}
\hwvT=\raisebox{-0.4\height}{\ %
\begin{texdraw}%
\fontsize{6}{6}\selectfont
\textref h:C v:C \drawdim em
\setunitscale 1.35
\move(0 2)\rlvec(2 0)
\move(0 1)\rlvec(2 0)\rlvec(0 1)
\move(0 0)\rlvec(1 0)\rlvec(0 2)
\move(0 0)\rlvec(0 2)
\htext(0.5 1.5){$1$} \htext(1.5 1.5){$1$}
\htext(0.5 0.5){$2$}
\end{texdraw}
},
\end{equation*}
which consists of just the basic columns.
It corresponds to the highest weight element $b_{\infty}\in \Binf$.

\begin{theorem}
The set $\SML$ of all marginally large tableaux forms a crystal and is isomorphic to the crystal $\Binf$.
\end{theorem}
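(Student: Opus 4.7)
The plan is to transfer the known crystal structure from the Kang--Misra tableaux $T(\lambda)$ to $\SML$, exploiting the fact that every marginally large tableau sits inside $T(\lambda)$ for all sufficiently dominant $\lambda$, and then identify the resulting crystal with $\Binf$ by means of Kashiwara's characterization of $\Binf$ as a ``direct limit'' (or suitable union) of the $B(\lambda)$'s.

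First I would define the Kashiwara operators $\eit, \fit$ on $\SML$. Given $T \in \SML$, pick any $\lambda \in \pwl$ large enough that the tableau obtained from $T$ by padding with additional basic columns (and extra $1$-boxes in the first row) belongs to $T(\lambda)$; call this padded tableau $T^{\lambda}$. Apply the Kang--Misra Kashiwara operator to $T^{\lambda}$ inside $T(\lambda)$. The result either violates the ``exactly one basic $i$-column'' condition or the ``only one extra $1$-box'' condition of marginal largeness, or is already marginally large. In each case the fix is mechanical: delete a newly redundant basic $i$-column (for $\fit$) or insert a new basic $i$-column (for $\eit$) so as to land back in $\SML$. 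One must then check independence of the auxiliary $\lambda$, which follows because the Kang--Misra embeddings $T(\lambda) \hookrightarrow T(\lambda+\mu)$ obtained by padding are strict morphisms of crystals, and because marginal largeness is preserved by the removal/addition of basic columns. The weight, $\veps_i$, and $\vphi_i$ on $\SML$ are defined correspondingly (with $\wt$ shifted so that $\hwvT$ has weight $0$).

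Next I would verify that $\SML$ together with these operators satisfies the axioms of an abstract crystal. The tensor product rule and the axioms for $\veps_i, \vphi_i, \wt$ follow immediately from the fact that each $T(\lambda)$ is already a crystal and the padding/unpadding procedure is equivariant. The only nontrivial point is that the addition/removal of basic columns never conflicts with an $\eit$ or $\fit$ application that was supposed to act on one of the columns we added or removed; this is the $G_2$-specific case analysis (there are only finitely many shapes of the ``active box'' for $i\in\{1,2\}$), and it is where the precise formulation of marginal largeness pays off.

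Finally, to identify $\SML$ with $\Binf$, I would check that $\hwvT$ is a maximal vector of weight $0$, and that every element of $\SML$ is connected to $\hwvT$ by a sequence of $\eit$'s (by induction on the total number of boxes beyond the basic columns, observing that some $\eit$ strictly decreases this count while staying inside $\SML$). Together with the embedding $T(\lambda) \hookrightarrow \SML$ arising from the Kang--Misra realization $B(\lambda)\cong T(\lambda)$, this realizes $\SML$ as a union of images of the $B(\lambda)$'s compatible with the natural projections, which is precisely Kashiwara's characterization of $\Binf$; hence $\SML \cong \Binf$ as crystals.

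The main obstacle will be the case analysis in the second step: showing that the ``pad, apply Kang--Misra, and re-normalize'' procedure is well-defined and really produces a tableau in $\SML$ for every $i$ and every $T$, especially because the Kang--Misra rules for $G_2$ involve delicate distance conditions on the second-row boxes and on the $0$-box. Once this is established, the remaining steps are essentially formal.
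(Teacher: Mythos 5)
The paper itself offers no proof of this statement: Section~2.2 recalls it from \cite{HoLe} (with \cite{HoLe2} covering the exceptional types, hence $G_2$), so the only ``paper proof'' is a citation. Your strategy --- transfer the Kang--Misra crystal structure on $\Sla$ \cite{KaMi} to $\SML$ by a padding/renormalization procedure, then identify $\SML$ with $\Binf$ via the realization of $\Binf$ as a suitable union/limit of the $\Bla$'s \cite{Ka02} --- is precisely the route of that cited construction, and it is the route the introduction of this paper attributes to $T(\infty)$. So in outline you are doing the same thing as the source the theorem rests on.

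However, three points need repair. (i) Your renormalization directions are swapped: as recalled in Section~2.2, it is $\fit$ that can destroy largeness (when it acts on the $i$-box of the unique basic $i$-column), and one then \emph{inserts} a basic $i$-column; it is $\eit$ that can produce a superfluous basic $i$-column, which one then \emph{removes}. (Also, a marginally large tableau lies in exactly one $\Sla$; it is only its paddings that lie in the more dominant $T(\la+\mu)$'s.) (ii) The padding maps $\Sla \to \Tb(\la+\mu)$ are \emph{not} strict crystal morphisms: they shift $\wt$ by $\mu$ and $\vphi_i$ by $\mu(h_i)$, and they commute with $\fit$ only up to vanishing (an element killed by $\fit$ in $\Sla$ need not be killed after padding). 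The independence of the auxiliary $\la$ is true, but its correct justification is that for sufficiently dominant $\la$ the combinatorial action stabilizes (equivalently $\vphi_i$ becomes large enough that $\fit$ never vanishes and no boundary effects occur), not strict equivariance. (iii) Verifying that $\hwvT$ is the unique maximal vector of weight $0$ and that $\SML$ is connected does not by itself force $\SML\cong\Binf$; the limit characterization must genuinely be applied, and the maps entering it are the non-strict, weight-shifted ones above --- in particular there is no literal embedding $\Sla\hookrightarrow\SML$, since most elements of $\Sla$ are not marginally large. With these corrections your sketch matches the argument of \cite{HoLe,HoLe2}; the $G_2$-specific case analysis you rightly flag (the behavior of the $0$-box and the second-row boxes under the Kang--Misra rules) is indeed where the real work lies.
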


Let us recall the crystal structure on the set $T(\infty)$.
We start with the description of the Kashiwara operator actions.
As a first step, we recall the operator actions on the crystal~$T(\la)$.
One first expands a given tableau into its \emph{tensor product form} through the \emph{far eastern reading}, as exemplified by
\begin{equation*}
\begin{tabular}{rll}
\raisebox{-0.4\height}{
 \begin{texdraw}%
 \drawdim in
 \fontsize{6}{6}\selectfont
 \textref h:C v:C
 \drawdim em
 \setunitscale 1.35
 \move(0 2)\rlvec(7 0)
 \move(0 1)\rlvec(7 0)\rlvec(0 1)
 \move(0 0)\rlvec(2 0)\rlvec(0 2)
 \move(0 0)\rlvec(0 2)
 \move(1 0)\rlvec(0 2)
 \move(3 1)\rlvec(0 1)
 \move(6 1)\rlvec(0 1)
 \move(7 1)\rlvec(0 1)
 \move(5 1)\rlvec(0 1)
 \move(4 1)\rlvec(0 1)
 \htext(0.5 1.5){$1$}
 \htext(1.5 1.5){$1$}
 \htext(2.5 1.5){$1$}
 \htext(6.5 1.5){$\bar 1$}
 \htext(5.5 1.5){$\bar 3$}
 \htext(4.5 1.5){$0$}
 \htext(3.5 1.5){$3$}
 \htext(0.5 0.5){$2$}
 \htext(1.5 0.5){$3$}
 \end{texdraw}%
 }
 \ \ =
\raisebox{-0.3\height}{
\begin{texdraw}%
\fontsize{5}{5}\selectfont
\textref h:C v:C
\drawdim em
\setunitscale 1.5
\move(0 0)\rlvec(1 0)\rlvec(0 1)\rlvec(-1 0)\rlvec(0 -1)
\htext(0.5 0.5){$\bar 1$}
\end{texdraw}%
}
$\otimes$
\raisebox{-0.3\height}{%
\begin{texdraw}%
\fontsize{5}{5}\selectfont
\textref h:C v:C
\drawdim em
\setunitscale 1.5
\move(0 0)\rlvec(1 0)\rlvec(0 1)\rlvec(-1 0)\rlvec(0 -1)
\htext(0.5 0.5){$\bar 3$}
\end{texdraw}%
}
$\otimes$
\raisebox{-0.3\height}{%
\begin{texdraw}%
\fontsize{6}{6}\selectfont
\textref h:C v:C
\drawdim em
\setunitscale 1.5
\move(0 0)\rlvec(1 0)\rlvec(0 1)\rlvec(-1 0)\rlvec(0 -1)
\htext(0.5 0.5){$0$}
\end{texdraw}%
}
$\otimes$
\raisebox{-0.3\height}{%
\begin{texdraw}%
\fontsize{6}{6}\selectfont
\textref h:C v:C
\drawdim em
\setunitscale 1.5
\move(0 0)\rlvec(1 0)\rlvec(0 1)\rlvec(-1 0)\rlvec(0 -1)
\htext(0.5 0.5){$3$}
\end{texdraw}%
}
$\otimes$
\raisebox{-0.3\height}{%
\begin{texdraw}%
\fontsize{6}{6}\selectfont
\textref h:C v:C
\drawdim em
\setunitscale 1.5
\move(0 0)\rlvec(1 0)\rlvec(0 1)\rlvec(-1 0)\rlvec(0 -1)
\htext(0.5 0.5){$1$}
\end{texdraw}%
}
$\otimes$
\raisebox{-0.3\height}{%
\begin{texdraw}%
\fontsize{6}{6}\selectfont
\textref h:C v:C
\drawdim em
\setunitscale 1.5
\move(0 0)\rlvec(1 0)\rlvec(0 1)\rlvec(-1 0)\rlvec(0 -1)
\htext(0.5 0.5){$1$}
\end{texdraw}%
}
$\otimes$
\raisebox{-0.3\height}{%
\begin{texdraw}%
\fontsize{5}{5}\selectfont
\textref h:C v:C
\drawdim em
\setunitscale 1.5
\move(0 0)\rlvec(1 0)\rlvec(0 1)\rlvec(-1 0)\rlvec(0 -1)
\htext(0.5 0.5){$3$}
\end{texdraw}%
}
$\otimes$
\raisebox{-0.3\height}{%
\begin{texdraw}%
\fontsize{5}{5}\selectfont
\textref h:C v:C
\drawdim em
\setunitscale 1.5
\move(0 0)\rlvec(1 0)\rlvec(0 1)\rlvec(-1 0)\rlvec(0 -1)
\htext(0.5 0.5){$1$}
\end{texdraw}%
}
$\otimes$
\raisebox{-0.3\height}{%
\begin{texdraw}%
\fontsize{6}{6}\selectfont
\textref h:C v:C
\drawdim em
\setunitscale 1.5
\move(0 0)\rlvec(1 0)\rlvec(0 1)\rlvec(-1 0)\rlvec(0 -1)
\htext(0.5 0.5){$2$}
\end{texdraw}%
}\,.
\end{tabular}
\end{equation*}
Then, the \emph{tensor product rule} is used to apply $\fit$ or $\eit$ to one of the boxes, after which the resulting tensor product form is reconstructed into the shape of the original tableau.

Now, to apply $\fit$ to a marginally large tableau, we go through the following procedure.
\begin{enumerate}
\item
Apply $\fit$ to the tableau as usual.
If the result is a large tableau, it is automatically marginally large, and we are done.
\item
If the result is not large, then $\fit$ must have been applied to the $i$-box in the basic $i$-column.
Insert one basic $i$-column to the left of the box $\fit$ acted on.
\end{enumerate}
Analogous process for the $\eit$ operator is as follows.
\begin{enumerate}
\item Apply $\eit$ to the tableau as usual. If the result is zero or a marginally large tableau, we are done.
\item Otherwise, the result is large, but not marginally large.
   The $\eit$ operator must have acted on the box sitting to the right of the $i$-box in the basic $i$-column.
   Remove the column containing the changed box, which must be a new basic $i$-column.
\end{enumerate}

We illustrate the $\tilde{f}_2$ action on $\hwvT$ and the $\tilde{f}_1$ action on $\tilde{f}_2(\hwvT)$.
The dark shaded boxes are the ones $\fit$ has acted on, and the light shadings show columns inserted to preserve largeness.
\begin{align*}
\hwvT= \raisebox{-0.4\height}{\ %
\begin{texdraw}%
\fontsize{6}{6}\selectfont
\textref h:C v:C \drawdim em
\setunitscale 1.35
\move(0 2)\rlvec(2 0)
\move(0 1)\rlvec(2 0)\rlvec(0 1)
\move(0 0)\rlvec(1 0)\rlvec(0 2)
\move(0 0)\rlvec(0 2)
\htext(0.5 1.5){$1$} \htext(1.5 1.5){$1$}
\htext(0.5 0.5){$2$}
\end{texdraw}
}
\dashrightarrow
\raisebox{-0.4\height}{\ %
\begin{texdraw}
 \fontsize{6}{6}\selectfont
 \textref h:C v:C
 \drawdim em
 \setunitscale 1.35
 \move(0 0)\rlvec(1 0)\rlvec(0 1)\rlvec(-1 0)\ifill f:0.5
 \move(0 2)\rlvec(2 0)
 \move(0 1)\rlvec(2 0)\rlvec(0 1)
 \move(0 0)\rlvec(1 0)\rlvec(0 2)
 \move(0 0)\rlvec(0 2)
 \htext(0.5 1.5){$1$} \htext(1.5 1.5){$1$}
 \htext(0.5 0.5){$3$}
\end{texdraw}
}
\dashrightarrow &
\raisebox{-0.4\height}{\ %
\begin{texdraw}
 \fontsize{6}{6}\selectfont
 \textref h:C v:C
 \drawdim em
 \setunitscale 1.35
 \move(0 0)\rlvec(1 0)\rlvec(0 2)\rlvec(-1 0)\ifill f:0.8
 \move(0 2)\rlvec(3 0)
 \move(1 0)\rlvec(0 2)
 \move(0 1)\rlvec(3 0)\rlvec(0 1)
 \move(0 0)\rlvec(2 0)\rlvec(0 2)
 \move(0 0)\rlvec(0 2)
 \htext(0.5 1.5){$1$} \htext(1.5 1.5){$1$} \htext(2.5 1.5){$1$}
 \htext(0.5 0.5){$2$} \htext(1.5 0.5){$3$}
\end{texdraw}
}=\tilde{f}_2(\hwvT)\\
&\dashrightarrow
\raisebox{-0.4\height}{\ %
\begin{texdraw}
 \fontsize{6}{6}\selectfont
 \textref h:C v:C
 \drawdim em
 \setunitscale 1.35
 \move(2 1)\rlvec(1 0)\rlvec(0 1)\rlvec(-1 0)\ifill f:0.5
 \move(0 2)\rlvec(3 0)
 \move(1 0)\rlvec(0 2)
 \move(0 1)\rlvec(3 0)\rlvec(0 1)
 \move(0 0)\rlvec(2 0)\rlvec(0 2)
 \move(0 0)\rlvec(0 2)
 \htext(0.5 1.5){$1$} \htext(1.5 1.5){$1$} \htext(2.5 1.5){$2$}
 \htext(0.5 0.5){$2$} \htext(1.5 0.5){$3$}
\end{texdraw}
}
\dashrightarrow
\raisebox{-0.4\height}{\ %
\begin{texdraw}
 \fontsize{5}{5}\selectfont
 \textref h:C v:C
 \drawdim em
 \setunitscale 1.35
 \move(2 1)\rlvec(1 0)\rlvec(0 1)\rlvec(-1 0)\ifill f:0.8
 \move(0 2)\rlvec(4 0)
 \move(1 0)\rlvec(0 2)
 \move(3 1)\rlvec(0 1)
 \move(0 1)\rlvec(4 0)\rlvec(0 1)
 \move(0 0)\rlvec(2 0)\rlvec(0 2)
 \move(0 0)\rlvec(0 2)
 \htext(0.5 1.5){$1$} \htext(1.5 1.5){$1$} \htext(2.5 1.5){$1$} \htext(3.5 1.5){$2$}
 \htext(0.5 0.5){$2$} \htext(1.5 0.5){$3$}
\end{texdraw}
}=\tilde{f}_1(\tilde{f}_2(\hwvT))
\end{align*}

Any given marginally large tableau $T\in T(\infty)$ is contained in exactly one $\Sla$ for some $\la\in\pwl$.
The weight~$\wt(T)$ is $\la$ less than the weight of tableau~$T$ viewed as an element of the crystal $\Sla$.
Furthermore, $\veps_i(T)$ is set to the corresponding value computed for $T$ as an element of $\Sla$, and $\vphi_i(T)=\veps_i(T)+\wt(T)(h_i)$.

\section{Monomial Description of crystal $\Bla$}\label{sec:3}

The monomial $N_{\la}=Y_1(1)^{\la(h_1)}Y_2(0)^{\la(h_2)}$ is a maximal element of the crystal $\NM$ of weight $\la=\la(h_1)\La_1+\la(h_2)\La_2$.
In this section, we present the connected component of~$\NM$, containing the maximal vector $N_{\la}$.
The previous work~\cite{sh} also gave a monomial description of the crystal $\Bla$ by locating as similar connected component.
However, our description will be completely different and more suitable for our later needs.
One characteristic of our new description is that the expression for each element displays how many of $\tilde{f}_1$ and $\tilde{f}_2$ need to be used to reach the element from the highest weight element.

For each $\la\in\pwl$, let $\Nla$ be the set of all monomials of the form
\begin{equation}\label{def:3-1}
\begin{aligned}
Y_1(1)^{\la(h_1)}Y_2(0)^{\la(h_2)} \cdot\,
&U_2(0)^{-u_{2,0}}U_1(1)^{-u_{1,1}}\\
&U_2(1)^{-u_{2,1}}U_1(2)^{-u_{1,2}}\\
&U_2(2)^{-u_{2,2}}U_1(3)^{-u_{1,3}}
\end{aligned}
\end{equation}
with the exponents satisfying
\begin{equation}\label{deficond}
\begin{aligned}
&0 \leqslant u_{2,0} \leqslant \la(h_2),
& &0 \leqslant u_{1,1} \leqslant 3u_{2,0}\!+\!\la(h_1),\\
&0 \leqslant u_{2,1} \leqslant \min\{\frac{2u_{1,1}\!+\!\la(h_1)}{3},u_{1,1}\},
& &0 \leqslant u_{1,2} \leqslant \min\{\frac{3u_{2,1}\!+\!\la(h_1)}{2},2u_{2,1}\},\\
&0 \leqslant u_{2,2} \leqslant \min\{\frac{u_{1,2}\!+\!\la(h_1)}{3},\frac{u_{1,2}}{2}\},
& &0 \leqslant u_{1,3} \leqslant \min\{\la(h_1),u_{2,2}\}.
\end{aligned}
\end{equation}
The vector $N_{\la}=Y_1(1)^{\la(h_1)}Y_2(0)^{\la(h_2)}$ is contained in this set $\Nla$, as can be seen by taking $u_{i,m}=0$, for every~$i$ and~$m$.

\begin{remark}
In terms of the $Y_i(m)$s, the monomial of~\eqref{def:3-1} may be expressed as follows:
\begin{equation}\label{def:3-2}
\begin{aligned}
&Y_1(1)^{\la(h_1)+3u_{2,0}-u_{1,1}}
Y_1(2)^{3u_{2,1}-u_{1,1}-u_{1,2}}
Y_1(3)^{3u_{2,2}-u_{1,2}-u_{1,3}}
Y_1(4)^{-u_{1,3}}\\
&\cdot\, Y_2(0)^{\la(h_2)-u_{2,0}}
Y_2(1)^{-u_{2,0}-u_{2,1}+u_{1,1}}
Y_2(2)^{-u_{2,1}-u_{2,2}+u_{1,2}}
Y_2(3)^{-u_{2,2}+u_{1,3}}.
\end{aligned}
\end{equation}
\end{remark}

\begin{theorem}\label{prop:3}
The set $\Nla$ is the connected component of the crystal $\NM$, containing the vector $N_{\la}=Y_1(1)^{\la(h_1)}Y_2(0)^{\la(h_2)}$ of weight $\la$, and $\Nla\cong \Bla$.
\end{theorem}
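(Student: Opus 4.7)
The plan is to apply Kashiwara's realization theorem (Theorem~\ref{nakakash}), which reduces the statement to showing that $\Nla$ is precisely the connected component of $\NM$ containing the maximal vector $N_{\la}$. Maximality of $N_{\la}$ is immediate: all partial sums $\sum_{k\leqslant m} y_i(k)$ for this monomial are non-negative, so $\veps_i(N_{\la})=0$ for $i\in I$. Hence the connected component containing $N_{\la}$ is isomorphic to $\Bla$, and the only remaining task is to identify that component with $\Nla$.

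I would carry this out in two parallel steps. The first is \emph{closure}: for every $N\in\Nla$ and each $i\in I$, both $\eit N$ and $\fit N$ lie in $\Nla\cup\{0\}$. Using the expression~\eqref{def:3-2}, one computes the partial sums $S_i(m)=\sum_{k\leqslant m} y_i(k)$ directly in terms of the parameters $u_{j,k}$. For $i=1$ only $S_1(1),S_1(2),S_1(3),S_1(4)$ are nontrivial, and analogously $S_2(0),\ldots,S_2(3)$ for $i=2$; hence $\vphi_i(N)$, $\veps_i(N)$, $m_f(N,i)$, and $m_e(N,i)$ admit closed-form expressions after a finite case analysis determined by which $S_i(m)$ attains the maximum. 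In each case, $\fit N$ multiplies the parameter tuple $(u_{2,0},u_{1,1},u_{2,1},u_{1,2},u_{2,2},u_{1,3})$ by a single unit vector at exactly one of its six positions, and the bounds in~\eqref{deficond}, which are crafted for this purpose, are checked to be preserved; the analysis of $\eit N$ is symmetric. The second step is \emph{connectedness}: every $N\in\Nla$ is reachable from $N_{\la}$ by a chain of Kashiwara operators that stays inside $\Nla$. I would induct on $|u|=u_{2,0}+u_{1,1}+u_{2,1}+u_{1,2}+u_{2,2}+u_{1,3}$, the base case $|u|=0$ recovering $N_{\la}$; for the inductive step, the closure analysis identifies a position at which some $u_{i,m}$ can be reduced by applying $\eit$ without leaving $\Nla$, giving a predecessor with strictly smaller $|u|$.

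The principal obstacle is the determination of $m_f(N,i)$ and $m_e(N,i)$ inside the closure step. Each ``$\min$'' clause in~\eqref{deficond} encodes a case split between two candidate positions for the maximum among the partial sums $S_i(m)$, and several inequalities among the $u_{j,k}$ must be resolved simultaneously in order to predict which of $u_{i,0}, u_{i,1}, u_{i,2}$ (or $u_{i,1},u_{i,2},u_{i,3}$ for $i=1$) gets updated by $\fit$. Once the case enumeration is complete and each case shown to preserve the bounds, closure and connectedness together identify $\Nla$ with the connected component of $N_{\la}$, and a final invocation of Theorem~\ref{nakakash} gives $\Nla\cong\Bla$.
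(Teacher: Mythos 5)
Your proposal is correct and follows essentially the paper's own route: reduce to Theorem~\ref{nakakash}, verify closure of $\Nla$ under $\fit$ and $\eit$ by a finite case analysis over the possible positions $m$ with $\fit N=U_i(m)^{-1}N$, and obtain connectedness by showing every $N\neq N_{\la}$ in $\Nla$ admits a raising operator that stays in $\Nla$. The only difference is one of economy: the paper never computes $m_f(N,i)$ or $m_e(N,i)$ in closed form, instead using just the necessary condition $y_i(m_f(N,i))>0$ together with the supposedly violated bound of~\eqref{deficond} to produce contradictory inequalities, and it packages connectedness as the claim that $N_{\la}$ is the unique maximal vector of $\Nla$ (exhibiting $\veps_i(N)>0$ for the largest $m$, then largest $i$, with $u_{i,m}>0$) rather than as your induction on $|u|$.
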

\begin{proof}
Since the final claim follows from Theorem~\ref{nakakash}, it suffices to show that the actions of the Kashiwara operators on $\Nla$ satisfy the properties
\begin{equation*}
\fit \Nla\subset \Nla\cup\{0\}, \quad \eit \Nla\subset \Nla\cup\{0\},
\end{equation*}
for all $i\in I$, and that every element of $\Nla$ is connected to the element $N_{\la}$ by Kashiwara operator actions.

Fix $i\in I=\{1,2\}$ and let us suppose $N\in \Nla$ is such that $\fit N \not\in \Nla\cup \{0\}$.
Then, since $\fit N \neq 0$, we must have $\vphi_i(N)>0$ and $\fit N=U_i(m)^{-1}N$, for some $m$.
Here, $1\leqslant m\leqslant 3$ for the $i=1$ case, and $0\leqslant m\leqslant 2$ for the $i=2$ case, as can be seen from the general form~\eqref{def:3-2} of the monomial in~$N(\la)$ and the definition of $m_f(N,i)$ given by~\eqref{mfme}.
Below, we will show that the assumption on $N$ leads to contradictions, for such $i$ and~$m$.

(1) Let us first assume $i=1$ and $m=1$ were used.
Then, the fact $m_f(N,1)=m=1$ implies that the exponent $y_1(1)=\la(h_1)+3u_{2,0}-u_{1,1}$ of $Y_1(1)$ appearing in the monomial $N$ is positive, so that we have
\begin{equation*}
\la(h_1)+3u_{2,0}>u_{1,1}.
\end{equation*}
On the other hand, since the assumption $\fit N \not\in \Nla$ can only be associated with the violation of the restriction for the exponent of $U_1(1)$ appearing in the monomial $\fit N$, given in the condition~\eqref{deficond}, we can state
\begin{equation*}
u_{1,1}+1> \la(h_1)+3u_{2,0}.
\end{equation*}
Since these two inequalities cannot be true simultaneously, we have a contradiction.

(2) Assume $i=1$ and $m=2$ were used.
The fact $m_f(N,1)=2$ implies $y_1(2)>0$, so that we have
\begin{equation*}
3u_{2,1}-u_{1,1}> u_{1,2}.
\end{equation*}
On the other hand, assumption $\fit N \not\in \Nla$ implies
\begin{equation*}
u_{1,2}+1>\min\{\frac{3u_{2,1}+\la(h_1)}{2},2u_{2,1}\}.
\end{equation*}
A combination of the two inequalities implies
\begin{equation*}
3u_{2,1}-u_{1,1} > \min\{\frac{3u_{2,1}+\la(h_1)}{2},2u_{2,1}\},
\end{equation*}
so that at least one of
\begin{align*}
u_{2,1} > \frac{2u_{1,1}+\la(h_1)}{3},\quad
u_{2,1}> u_{1,1}
\end{align*}
must be true.
However, both of these contradict the assumption $N\in \Nla$.

(3) Assume $i=1$ and $m=3$ were used.
Since $m_f(N,1)=3$ implies $y_1(3)>0$, we have
\begin{equation*}
3u_{2,2}-u_{1,2}>u_{1,3}.
\end{equation*}
But, assumption $\fit N \not\in \Nla$ implies
\begin{equation*}
u_{1,3}+1>\min\{\la(h_1),u_{2,2}\}.
\end{equation*}
A combination of the two inequalities implies that at least one of
\begin{align*}
u_{2,2} > \frac{u_{1,2}+\la(h_1)}{3},\quad
u_{2,2} > \frac{u_{1,2}}{2}
\end{align*}
must be true.
However, both of these contradict the assumption $N\in \Nla$.

(4) Let us next treat the $i=2$ and $m=0$ case.
As before, the process of obtaining $m_f(N,2)=0$ implies that $y_2(0)>0$, so that we have
\begin{equation*}
\la(h_2)>u_{2,0}.
\end{equation*}
But, since the assumption $\fit N \not\in \Nla$ can only be associated with the violation of the first line of the condition~\eqref{deficond}, we can state
\begin{equation*}
u_{2,0}+1 > \la(h_2).
\end{equation*}
Since these two inequalities cannot be true simultaneously, we have a contradiction.

(5) Assume $i=2$, and $m=1$ or $2$ were used.
Let us briefly use the notation $A$, $B$, and $C$ as follows:
\begin{center}
      \begin{tabular}{r|c|c|c}
      $m$ & A & B & C\\
      \hline
      \hline
      $1$ & $-u_{2,0}+u_{1,1}$ & $\frac{2u_{1,1}+\la(h_1)}{3}$ & $u_{1,1}$\\
      $2$ & $-u_{2,1}+u_{1,2}$ & $\frac{u_{1,2}+\la(h_1)}{3}$ & $\frac{u_{1,2}}{2}$
      \end{tabular}
\end{center}
The fact $m_f(N,2)=m$ implies that $y_2(m)>0$, so that we have
\begin{equation*}
A>u_{2,m}.
\end{equation*}
On the other hand, assumption $\fit N \not\in \Nla$ implies
\begin{equation*}
u_{2,m}+1>\min\{B,C\}.
\end{equation*}
A combination of the two inequalities implies
\begin{equation*}
A > \min\{B,C\},
\end{equation*}
so that at least one of $A>B$, $A>C$ must be true.
However, both of these contradict the assumption $N\in \Nla$.

This completes the proof that $\fit \Nla\subset \Nla\cup\{0\}$.
Verification of $\eit \Nla\subset \Nla\cup\{0\}$ can be done similarly.

To show the connectedness of $\Nla$, it suffices to show that the only maximal element in $\Nla$ is $N_{\la}=Y_1(1)^{\la(h_1)}Y_2(0)^{\la(h_2)}$.
Suppose $N\in \Nla$ is such that $\eit(N)=0$, for all $i\in I$, and suppose $N\neq N_{\la}$.
The latter assumption ensures the existence for at least one positive $u_{i,m}$ associate to $N$.
We first locate the largest~$m$ for which there is a positive $u_{i,m}$ and then choose the largest~$i$ for which $u_{i,m}$ is positive, with the~$m$ already fixed.
For such $i$ and $m$, we can confirm
\begin{equation*}
y_i(m+1)=-u_{i,m}<0
\end{equation*}
and that $y_i(k)=0$ for all $k>m+1$, using the exponents of $Y_i(k)$ appearing in the expression~\eqref{def:3-2} of a monomial $N$.
This implies
\begin{equation*}
-\sum_{k>m} y_i(k)>0,
\end{equation*}
so that
\begin{equation*}
\veps_i(N)=\max\big\{-\sum_{k>j} y_i(k)\,|\, j\in\Z\big\}>0.
\end{equation*}
This contradicts the assumption of $N$ being a maximal vector.
\end{proof}

The highest weight element of the crystal $\Nla$ is $N_{\la} = Y_1(1)^{\la(h_1)}Y_2(0)^{\la(h_2)}$.
The element given by~\eqref{def:3-1} can be reached from $N_{\la}$ through applications of $(u_{1,1}+u_{1,2}+u_{1,3})$-many $\tilde{f}_1$ and $(u_{2,0}+u_{2,1}+u_{2,2})$-many $\tilde{f}_2$, in some order, and so the weight of the element is
\begin{equation*}
\la-(u_{1,1}+u_{1,2}+u_{1,3})\alpha_1-(u_{2,0}+u_{2,1}+u_{2,2})\alpha_2.
\end{equation*}

\section{Crystal $\Bla$ in $\Binf$ expressed as Tableaux}\label{sec:4}

In this section, we present the crystal $\Bla$ using marginally large tableaux, which are elements from the tableau description $T(\infty)$ of~$\Binf$ recalled in Section~\ref{sec:2}.
The starting point in obtaining the goal of this section is the unique strict crystal embedding
\begin{equation*}
\Omega_{\la}: \Bla \hookrightarrow \Binf\otimes \Tvla,
\end{equation*}
such that the highest weight vector $b_{\la}$ is sent to $b_{\infty}\otimes \tvla$, where $b_{\infty}$ is the highest weight vector of $B(\infty)$~\cite{Naka}.
Here, $\la\in\pwl$ and the crystal $\Tvla$ is the single-element set $\{\tvla\}$ with the following crystal structure:
\begin{equation*}
\wt(\tvla)=\la,\quad
\veps_i(\tvla)=-\la(h_i),\quad
\vphi_i(\tvla)=0,\quad
\eit(\tvla)=0,\quad
\fit(\tvla)=0.
\end{equation*}
Since the image of the embedding $\Omega_{\la}$ is the connected component in the crystal $\Binf\otimes \Tvla$ containing the element $b_{\infty}\otimes \tvla$, our goal will be achieved by finding the connected component containing $\hwvT\otimes \tvla$ in $\SML\otimes \Tvla$, where $\hwvT$ denote the highest weight element of~$\SML$.

Let us set up a notation for handling the number of boxes appearing in a tableau.
For any given marginally large tableau, we define integers $\ut_{v,w}$ as follows.
\begin{enumerate}
\item Symbol $\ut_{2,3}$ is the number of all boxes labeled $3$, appearing on the second row of the tableau.
\item For each $w=2,3,\bar{2},\bar{1}$, define $\ut_{1,w}$ to be the combined number of all boxes labeled $w$ through $\bar{1}$, appearing on the first row of the tableau.
\item Define $\ut_{1,0}$ to be the combined number of all boxes labeled $0$ through $\bar{1}$ plus the combined number of all boxes labeled $\bar{3}$ through $\bar{1}$, appearing on the top row of the tableau. In other words, the $0$-box is counted once, and the boxes labeled $\bar{3}$ trough $\bar{1}$ are counted twice.
\end{enumerate}
We will not make the dependence of~$\ut_{v,w}$ on the tableau explicit, as our use of this notation will always be in such a way that the tableau under consideration is unambiguous.

\begin{remark}
The first row of a marginally large tableau consists of
\begin{equation}
\begin{aligned}
&\textup{$(\ut_{1,2}-\ut_{1,3})$-many $2$s},&
&\textup{$\lfloor A \rfloor$-many $3$s},\\
&\textup{$((A-\lfloor A\rfloor)+(B-\lfloor B\rfloor))$-many $0$s},&
&\textup{$\lfloor B\rfloor$-many $\bar{3}$s},\\
&\textup{$(\ut_{1,\bar{2}}-\ut_{1,\bar{1}})$-many $\bar{2}$s},&
&\textup{$\ut_{1,\bar{1}}$-many $\bar{1}$s},
\end{aligned}
\end{equation}
where $A=\ut_{1,3}-\frac{\ut_{1,0}}{2}$ and $B=\frac{\ut_{1,0}}{2}-\ut_{1,\bar{2}}$.
\end{remark}

\begin{remark}\label{rem}
A system of $\ut_{v,w}$'s uniquely identify an element of $\SML$.
That is, given a system of $\ut_{v,w}$'s a corresponding marginally large tableau may not exist unless all inequalities
\begin{equation}\label{rem-}
\ut_{1,\bar{1}} \leqslant \ut_{1,\bar{2}} \leqslant \frac{\ut_{1,0}}{2} \leqslant \ut_{1,3} \leqslant \ut_{1,2}
\end{equation}
are satisfied, but there can be at most one marginally large tableau that correspond to the $\ut_{v,w}$'s.
\end{remark}

For each $\la\in\pwl$, let $\SML^{\la}$ be the set of all marginally large tableaux, such that the $\ut_{v,w}$'s satisfy the condition
\begin{equation}\label{deficond3}
\begin{aligned}
&0 \leqslant \ut_{2,3} \leqslant \la(h_2),\quad &
&0 \leqslant \ut_{1,2} \leqslant \la(h_1)+3\ut_{2,3},\\
&0 \leqslant \ut_{1,3} \leqslant \frac{\la(h_1)+2\ut_{1,2}}{3},\quad &
&0 \leqslant \ut_{1,0} \leqslant \frac{\la(h_1)+3\ut_{1,3}}{2},\\
&0 \leqslant \ut_{1,\bar{2}} \leqslant \frac{\la(h_1)+\ut_{1,0}}{3},\quad &
&0 \leqslant \ut_{1,\bar{1}} \leqslant \la(h_1),
\end{aligned}
\end{equation}
and we also define the set $\SML_{\la} = \{ T\otimes \tvla \mid T\in \SML^{\la}\}$.

\begin{example}
An element from the set $\SML^{\la}$ takes the form
\begin{equation*}
T = \raisebox{-0.5\height}{\ %
\begin{texdraw}
 \fontsize{6}{6}\selectfont
 \textref h:C v:C
 \drawdim em
 \setunitscale 1.5
 \move(14.5 0)\lvec(1 0)\lvec(1 -1)\lvec(14.5 -1)\lvec(14.5 0)
 \ifill f:0.8
 \move(0 -1)\lvec(-2.5 -1)\lvec(-2.5 -2)\lvec(0 -2)\lvec(0 -1)
 \ifill f:0.8
 \move(14.5 0)\lvec(-3.5 0)
 \move(14.5 -1)\lvec(-3.5 -1)
 \move(0 -2)\lvec(-3.5 -2)
 \move(-3.5 0)\lvec(-3.5 -2)
 \htext(-3 -0.5){$1$}
 \htext(-0.55 -0.5){$1$}
 \htext(-1.7 -0.5){$\cdots$}
 \move(0 0)\rlvec(0 -2)
 \htext(0.5 -0.5){$1$}
 \move(1 0)\rlvec(0 -1)
 \htext(2.25 -0.5){$2\!\cdots\!2$}
 \move(3.5 0)\rlvec(0 -1)
 \htext(4.75 -0.5){$3\!\cdots\!3$}
 \move(6 0)\rlvec(0 -1)
 \htext(6.5 -0.5){$0$}
 \move(7 0)\rlvec(0 -1)
 \htext(8.25 -0.5){$\bar{3}\!\cdots\!\bar{3}$}
 \move(9.5 0)\rlvec(0 -1)
 \htext(10.75 -0.5){$\bar{2}\!\cdots\!\bar{2}$}
 \move(12 0)\rlvec(0 -1)
 \htext(13.25 -0.5){$\bar{1}\!\cdots\!\bar{1}$}
 \move(14.5 0)\rlvec(0 -1)
 \htext(-1.25 -1.5){$3\!\cdots\!3$}
 \move(-2.5 -0.9)\rlvec(0 -1.1)
 \htext(-3 -1.5){$2$}
\end{texdraw}}\,,
\end{equation*}
with the shaded parts sized so that the $\ut_{v,w}$'s satisfy the conditions of~\eqref{deficond3}.
The highest weight element
\begin{equation*}
\hwvT=\raisebox{-0.4\height}{\ %
\begin{texdraw}%
\fontsize{6}{6}\selectfont
\textref h:C v:C \drawdim em
\setunitscale 1.5
\move(0 2)\rlvec(2 0)
\move(0 1)\rlvec(2 0)\rlvec(0 1)
\move(0 0)\rlvec(1 0)\rlvec(0 2)
\move(0 0)\rlvec(0 2)
\htext(0.5 1.5){$1$} \htext(1.5 1.5){$1$}
\htext(0.5 0.5){$2$}
\end{texdraw}
}
\end{equation*}
of $\SML$ belongs to the set $\SML^{\la}$, for any $\la\in\pwl$, with all its associated $\ut_{v,w}=0$.
\end{example}

\begin{lemma}\label{lem}
The actions of the Kashiwara operators on $\SML_{\la}$ satisfy the properties
\begin{equation*}
\fit \SML_{\la}\subset \SML_{\la}\cup\{0\},\quad \eit \SML_{\la}\subset \SML_{\la}\cup\{0\},
\end{equation*}
for all $i\in I$.
\end{lemma}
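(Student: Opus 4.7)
The plan is to reduce the claim, via the tensor product rule, to the statement that $\SML^{\la}$ is closed under the operators $\fit$ and $\eit$ (up to zero), and then establish this closure by a case analysis that tracks how each count $\ut_{v,w}$ changes. Since $\wt(\tvla)=\la$, $\veps_i(\tvla)=-\la(h_i)$, $\vphi_i(\tvla)=0$, and $\fit\tvla=\eit\tvla=0$, the tensor product rule yields
\[
\fit(T\otimes\tvla) = \begin{cases} (\fit T)\otimes\tvla & \text{if } \vphi_i(T) > -\la(h_i), \\ 0 & \text{otherwise,} \end{cases}
\]
and likewise $\eit(T\otimes\tvla) = (\eit T)\otimes\tvla$ precisely when $\vphi_i(T)\geqslant -\la(h_i)$, and $0$ otherwise. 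Recalling that $\fit T\neq 0$ always holds in $\Binf$, the lemma reduces to two assertions: (a) if $T\in\SML^{\la}$ and $\vphi_i(T)>-\la(h_i)$, then $\fit T\in\SML^{\la}$; (b) if $T\in\SML^{\la}$, $\vphi_i(T)\geqslant -\la(h_i)$, and $\eit T\neq 0$, then $\eit T\in\SML^{\la}$.

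For each $i\in I$, the next step is to identify, via the far eastern reading and the tensor product rule, which box of $T$ the operator modifies, and then to read off the change in the $\ut_{v,w}$'s. In the $G_2$ vector representation, $\tilde f_2$ realizes $2\to 3$ or $\bar 3\to\bar 2$, while $\tilde f_1$ realizes $1\to 2$, $3\to 0$, $0\to\bar 3$, or $\bar 2\to\bar 1$; a modification at the $i$-box of the basic $i$-column is compensated by insertion of a fresh basic $i$-column. In every case exactly one $\ut_{v,w}$ is incremented by $1$, so verification of (a) reduces to checking that the corresponding upper bound in \eqref{deficond3} is not saturated by $T$, which, upon expanding $\vphi_i(T)+\la(h_i)$ in terms of the $\ut_{v,w}$'s, is exactly the standing hypothesis. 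Claim (b) is dual: exactly one $\ut_{v,w}$ is decremented, and $\eit T\neq 0$ guarantees that it was strictly positive, while \eqref{rem-} preserves the derived upper bounds on later entries in the chain $\ut_{2,3},\ut_{1,2},\ut_{1,3},\ut_{1,0},\ut_{1,\bar 2},\ut_{1,\bar 1}$.

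The main obstacle is the bookkeeping: $\tilde f_1$ admits four distinct first-row positions of action plus the basic-column case, and selecting the position prescribed by the tensor product rule requires unwinding the far eastern reading of a general $T\in\SML^{\la}$. The cleanest route --- and the one I would take in practice --- is to match the tableau counts with the monomial exponents of \eqref{def:3-1} via $u_{2,0}\leftrightarrow\ut_{2,3}$, $u_{1,1}\leftrightarrow\ut_{1,2}$, $u_{2,1}\leftrightarrow\ut_{1,3}$, $u_{1,2}\leftrightarrow\ut_{1,0}$, $u_{2,2}\leftrightarrow\ut_{1,\bar 2}$, $u_{1,3}\leftrightarrow\ut_{1,\bar 1}$. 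Under this identification, \eqref{deficond3} together with the marginal-largeness inequalities \eqref{rem-} becomes precisely \eqref{deficond}, so the portions of the proof of Theorem~\ref{prop:3} treating $\fit$ and $\eit$ on $\Nla$ transfer essentially verbatim to establish (a) and (b).
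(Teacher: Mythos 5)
Your reduction via the tensor product rule and your inventory of which boxes $\fit$ can touch match the paper's setup, but the step that is supposed to close each case is wrong. You claim that in every case the needed non-saturation of the relevant inequality in \eqref{deficond3} ``is exactly the standing hypothesis'' $\vphi_i(T)>-\la(h_i)$. That is true only in the two cases where the operator hits the basic $i$-column (there one computes $\vphi_1(T)=3\ut_{2,3}-\ut_{1,2}$, resp. $\vphi_2(T)=-\ut_{2,3}$, and the hypothesis is literally the required slack). In the four cases where $\fit$ acts in the interior of the first row, $\vphi_i(T)+\la(h_i)$ is \emph{not} the slack of the inequality being threatened. For instance, if $\tilde f_1$ acts on a $3$- or $0$-box, what must be checked is $\ut_{1,0}+1\leqslant\frac{\la(h_1)+3\ut_{1,3}}{2}$, and a direct signature computation shows $\vphi_1(T)+\la(h_1)$ differs from this slack by terms involving $\ut_{2,3}$ and $\ut_{1,2}$; one can easily write down marginally large tableaux with $\vphi_1(T)+\la(h_1)>0$ for which that bound fails (they just violate a \emph{different} line of \eqref{deficond3}). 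So the hypothesis alone cannot carry these cases. What actually closes them is the combination of (i) the positional inequality forced by the bracketing rule (e.g.\ $\ut_{1,2}-\ut_{1,3}<2\ut_{1,3}-\ut_{1,0}$ when the action is at a $3$/$0$-box) with (ii) a \emph{different} inequality of \eqref{deficond3} already satisfied by $T$ (here $\ut_{1,3}\leqslant\frac{\la(h_1)+2\ut_{1,2}}{3}$); this is how the paper argues, and your write-up never derives (i) nor invokes (ii).

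Your proposed ``cleanest route''---transporting the proof of Theorem~\ref{prop:3} through the dictionary $u_{2,0}\leftrightarrow\ut_{2,3}$, etc.---is circular in this context. The identification of \eqref{deficond3} together with \eqref{rem-} with \eqref{deficond} is a statement about \emph{sets}; to transfer the closure argument you must also know that the Kashiwara operators correspond, i.e.\ that the box selected by the far-eastern signature rule matches the index $m_f(N,i)$ on the monomial side and that $\vphi_i$ values agree. That is exactly the content of the paper's main theorem (that $\omega_{\la}$ is a strict morphism), whose proof \emph{uses} the present lemma. Making the transfer non-circular forces you to prove the positional inequalities (e.g.\ ``action at a $3$/$0$-box implies $3\ut_{1,3}-\ut_{1,2}-\ut_{1,0}>0$,'' the tableau counterpart of $y_1(2)>0$) directly from the tensor product rule---which is precisely the missing case analysis above, not a verbatim transfer.
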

\begin{proof}
We will just deal with the $\fit$ case, since the $\eit$ case is very similar.
The tensor product rule states that
\begin{equation*}
\fit (T\otimes \tvla)=
\begin{cases}
T \otimes \fit(\tvla) &\text{when}\ \vphi_i(T)\leqslant \veps_i(\tvla),\\
\fit(T) \otimes \tvla &\text{when}\ \vphi_i(T)> \veps_i(\tvla),
\end{cases}
\end{equation*}
and since $\fit(\tvla)=0$, we have $\fit(T\otimes \tvla)=0$, whenever $\vphi_i(T)\leqslant \veps_i(\tvla)$.
Hence, it suffices to show that $\fit(T)\in T(\infty)^{\la}$, for any $T\in T(\infty)^{\la}$ satisfying the condition $\vphi_i(T)> \veps_i(\tvla)$.

Note that for any tableau $T\in \SML^{\la}$, the Kashiwara operator $\tilde{f}_1$ must act on a $3$-box, a $0$-box, a $\bar{2}$-box, or the unique basic $1$-column, appearing on the first row of $T$.
The Kashiwara operator $\tilde{f}_2$ must act on a $2$-box, a $\bar{3}$-box, appearing on the first row, or the $2$-box in the unique basic $2$-column of $T$.

Fix any $i\in I$, and suppose that $T\otimes \tvla\in \SML_{\la}$ is such that
\begin{equation}\label{4-111}
\vphi_i(T)> \veps_i(\tvla).
\end{equation}
Since we know that $\fit (T)$ is a marginally large tableau, it is suffices to show that it satisfies the condition~\eqref{deficond3}.

(1) Suppose that $i=1$ and $\tilde{f}_1$ has acted on an $1$-box which is the unique basic $1$-column of~$T$.
In this case, a single basic $1$-column must have been inserted into $T$ during the $\fit$ action.
The assumption implies
\begin{align*}
\vphi_i(T) =3\ut_{2,3}-\ut_{1,2}.
\end{align*}
Applying this value on the inequality~\eqref{4-111}, we obtain
\begin{align*}
3\ut_{2,3}-\ut_{1,2}=\vphi_1(T) > \veps_1(\tvla)=-\la(h_1),
\end{align*}
and this means
\begin{align*}
\ut_{1,2}+1\leqslant \la(h_1)+3\ut_{2,3}.
\end{align*}
This shows that $\tilde{f}_1(T)$ belongs to $\SML^{\la}$.

(2) Suppose that $i=1$ and $\tilde{f}_i$ has acted on a $3$-box or $0$-box in the first row of~$T$.
Then this assumption and the tensor product rule on multiple tensors imply
\begin{align*}
\ut_{1,2}-\ut_{1,3} < 2\ut_{1,3}-\ut_{1,0}.
\end{align*}
On the other hand, since $T$ satisfies the condition~\eqref{deficond3}, we have
\begin{align*}
\ut_{1,3} \leqslant \frac{\la(h_{1})+2\ut_{1,2}}{3}.
\end{align*}
A combination of the two inequalities implies
\begin{align*}
\ut_{1,0}+1 \leqslant \frac{\la(h_{1})+3\ut_{1,3}}{2}.
\end{align*}
Thus, $\tilde{f}_1(T)$ is contained in $\SML^{\la}$.

(3) Suppose that $i=1$ and $\tilde{f}_i$ has acted on a $\bar{2}$-box in the top row of~$T$.
This assumption and the tensor product rule imply
\begin{align*}
\ut_{1,0}-2\ut_{1,\bar{2}} < \ut_{1,\bar{2}}- \ut_{1,\bar{1}}.
\end{align*}
On the other hand, since $T\in \SML^{\la}$, we have
\begin{align*}
\ut_{1,\bar{2}} \leqslant \frac{\la(h_{1})+\ut_{1,0}}{3}.
\end{align*}
A combination of the two inequalities implies
\begin{align*}
\ut_{1,\bar{1}}+1 \leqslant \la(h_{1}).
\end{align*}
Thus, $\tilde{f}_1(T)$ is contained in $\SML^{\la}$.

(4) Suppose that $i=2$ and $\tilde{f}_2$ has acted on an $2$-box that is contained in the unique basic $2$-column of~$T$.
In this case, a single basic $2$-column must have been inserted into $T$ during the $\fit$ action.
The assumption implies
\begin{align*}
\vphi_i(T) =-\ut_{2,3},
\end{align*}
and so
\begin{align*}
-\ut_{2,3}=\vphi_2(T) >\veps_2(\tvla)=-\la(h_2),
\end{align*}
which follows from~\eqref{4-111}.
This means
\begin{align*}
\ut_{2,3}+1 \leqslant \la(h_2).
\end{align*}

(5) Suppose that $i=2$ and $\tilde{f}_2$ has acted on a $2$-box in the first row of $T$.
This assumption implies that
\begin{equation*}
t_{2,3}< t_{1,2}-t_{1,3}.
\end{equation*}
By combining this and the condition
\begin{equation*}
t_{1,2}\leqslant \la(h_1)+3t_{2,3}
\end{equation*}
in~\eqref{deficond3}, we obtain
\begin{equation*}
t_{1,3}+1\leqslant \frac{\la(h_1)+2t_{1,2}}{3}.
\end{equation*}
It is clearly that $\tilde{f}_1(T)$ is contained in $\SML^{\la}$.

(6) Suppose that $i=2$ and $\tilde{f}_i$ has acted on a $\bar{3}$-box in the first row of~$T$.
Then this assumption and the tensor product rule imply
\begin{align*}
\ut_{1,3}-\ut_{1,0}+\ut_{1,\bar{2}} < 0.
\end{align*}
And, since $T$ satisfies the condition~\eqref{deficond3}, we have
\begin{align*}
\ut_{1,0}\leqslant \frac{\la(h_{1})+3\ut_{1,3}}{2}.
\end{align*}
A combination of the two inequalities implies
\begin{align*}
\ut_{1,\bar{2}}+1 \leqslant \frac{\ut_{1,0}+\la(h_{1})}{3}.
\end{align*}

This completes the verification of $\fit \SML_{\la}\subset \SML_{\la}\cup\{0\}$.
\end{proof}

Now, in order to achieve our main gaol, it suffices to show that every element of $\SML_{\la}$ is connected to the element $T_{\infty}\otimes \tvla$.
This could be done directly, but we will use a slightly different approach.
We will compute the unique strict crystal embedding $\Omega_{\la}: \Bla \hookrightarrow \Binf\otimes \Tvla$ explicitly as maps between the crystal realizations $N(\la)$ and $T(\infty)\otimes \Tvla$.
By showing that the image of this map is the set $\SML_{\la}$, we will accomplish our goal and show the correspondence between the crystals $\Nla$ and $\SML_{\la}$, at the same time.

Reviewing the sets $\Nla$ and $\SML_{\la}$, focusing on the many inequality conditions, one can notice a similarity between the two general elements of $\Nla$ and $\SML_{\la}$, which leads to a natural correspondence between $\Nla$ and $\SML_{\la}$.
For any $\la\in\pwl$, we define the map
\begin{equation}\label{omega}
\omega_{\la}:\Nla\rightarrow \SML_{\la}\ (\subset T(\infty)\otimes \Tvla)
\end{equation}
to send the general monomial $N$ of~\eqref{def:3-1} satisfying~\eqref{deficond} to $T\otimes \tvla$, where $T$ is the unique marginally large tableau associated with the system of $t_{v,w}$'s given by
\begin{equation}\label{corr}
\ut_{2,3}=u_{2,0},\
\ut_{1,2}=u_{1,1},\
\ut_{1,3}=u_{2,1},\
\ut_{1,0}=u_{1,2},\
\ut_{1,\bar{2}}=u_{2,2},\
\ut_{1,\bar{1}}=u_{1,3}.
\end{equation}
Under the correspondence~\eqref{corr}, the conditions of~\eqref{deficond} imply that the conditions~\eqref{rem-} of Remark~\ref{rem} are satisfied, so that the existence of such a marginally large tableau is ensured.
The conditions of~\eqref{deficond} further imply that the tableau satisfies the conditions of~\eqref{deficond3}, so that the map is well defined.
This map sends the monomial $N_{\infty}$ to $T_{\infty}\otimes \tvla$, where $N_{\infty}$ and $T_{\infty}$ are the highest weight elements of crystals $N(\la)$ and $T(\infty)$, respectively, and it is easy to see that this map $\omega_{\la}$ is bijective.
The following is our main result.

\begin{theorem}
The set $\SML_{\la}$ is a subcrystal of $\SML \otimes \Tvla$, which is isomorphic to $\Nla\cong \Bla$.
\end{theorem}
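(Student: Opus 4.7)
The plan is to derive the theorem from Lemma~\ref{lem}, Theorem~\ref{prop:3}, and a verification that the bijection $\omega_\la:\Nla\to\SML_\la$ of~\eqref{omega} commutes with the Kashiwara operators. Lemma~\ref{lem} already gives the subcrystal assertion, and Theorem~\ref{prop:3} supplies $\Nla\cong\Bla$, so the only remaining content is to promote $\omega_\la$ from a weight-preserving bijection sending $N_\la\mapsto\hwvT\otimes\tvla$ (which it is by construction, since the weight formula displayed after Theorem~\ref{prop:3} reduces to the standard tableau weight under~\eqref{corr}) to a crystal isomorphism.

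To verify $\omega_\la(\fit N)=\fit\omega_\la(N)$ for every $N\in\Nla$ and every $i\in I$, I would match the case analyses of the two earlier proofs term-by-term. The proof of Theorem~\ref{prop:3} distinguishes the six pairs $(i,m)$ at which $\fit$ can alter a monomial $N$ of the form~\eqref{def:3-1}, each case incrementing a single exponent $u_{i,m}$ by one while fixing the others. The proof of Lemma~\ref{lem} likewise distinguishes the six positions at which $\fit$ can act on a tableau $T\in\SML^\la$, each case incrementing a single $\ut_{v,w}$ by one. Under the dictionary~\eqref{corr}, the six $(i,m)$ pairs stand in natural bijection with the six admissible box positions, and once this pairing is established in each case the desired identity becomes a coordinate-wise check. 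The $\eit$ case is completely analogous.

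The main obstacle is verifying that $m_f(N,i)$ for the monomial~\eqref{def:3-2} actually selects the coordinate that, under~\eqref{corr}, matches the box of $T$ prescribed on the tableau side by the tensor product rule applied to the far-eastern reading of $T\otimes\tvla$. This amounts to comparing the partial sums $\sum_{k\le m}y_i(k)$ read off from~\eqref{def:3-2} against the $i$-signature of $T\otimes\tvla$ in each of the six configurations, and observing that the condition $\vphi_i(T)>\veps_i(\tvla)$ under which the tensor is not annihilated by $\fit$ corresponds exactly to the first inequality that appears in each of the six sub-cases of the proof of Theorem~\ref{prop:3}. The bookkeeping is tedious but routine: each inequality of~\eqref{deficond} matches an inequality of~\eqref{deficond3}, the two slack conditions in the $\min$ expressions of~\eqref{deficond} being supplied on the tableau side by the semistandardness inequalities~\eqref{rem-} of Remark~\ref{rem}.

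Granting this matching, $\omega_\la$ is a crystal isomorphism $\Nla\xrightarrow{\sim}\SML_\la$, and combining with Theorem~\ref{prop:3} we conclude $\SML_\la\cong\Nla\cong\Bla$. A conceptual cross-check is available from the uniqueness of the strict embedding $\Omega_\la:\Bla\hookrightarrow\Binf\otimes\Tvla$: since $\SML_\la$ is a subcrystal of $\SML\otimes\Tvla$ containing $\hwvT\otimes\tvla$ and is now shown to be connected and isomorphic to $\Bla$, it must coincide with the image of $\Omega_\la$, thereby identifying our tableau model with the canonical one.
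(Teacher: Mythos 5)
Your proposal is correct and follows essentially the same route as the paper: the paper's proof likewise reduces the theorem to showing that the bijection $\omega_{\la}$ of~\eqref{omega} is a strict crystal morphism, matching the six $(i,m)$ cases of $\fit N=U_i(m)^{-1}N$ on the monomial side with the six admissible box positions on the tableau side under the dictionary~\eqref{corr}, and invoking Lemma~\ref{lem} to guarantee $\fit T$ remains in $\SML^{\la}$, before concluding $\SML_{\la}\cong\Nla\cong\Bla$ via Theorem~\ref{prop:3}. The only piece you leave implicit that the paper records explicitly is the direct check that $\omega_{\la}$ also preserves $\wt$, $\veps_i$, and $\vphi_i$ (not just commutation with the operators), which is needed since the functions on $\SML\otimes\Tvla$ are given by the tensor product rule rather than being automatically normal.
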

\begin{proof}
Since the map $\omega_{\la}:\Nla\rightarrow T(\infty)\otimes \Tvla$ is injective, sends $N_{\infty}$ to $T_{\infty}\otimes \tvla$, and $\omega_{\la}(\Nla)=\SML_{\la}$, it suffices to show that the map $\omega_{\la}$ is a strict crystal morphism.

Let us assume a fixed $i\in I$ and take $\omega_{\la}(N)=T\otimes \tvla$, i.e., assume condition~\eqref{corr}, throughout this proof.
From the definitions of the maps $\wt$ and $\veps_i$ for the crystals $\Nla$ and $\SML\otimes \Tvla$, the equalities $\wt(N)=\wt(T\otimes \tvla)$ and $\veps_i(N)=\veps_i(T\otimes \tvla)$ are obtained through direct computation, under the identification~\eqref{corr}.
These two equalities also lead to the identity
\begin{equation*}
\vphi_i(N) =\veps_i(N)+\wt(N)(h_i) =\veps_i(T\otimes \tvla)+\wt(T\otimes \tvla)(h_i)=\vphi_i(T\otimes \tvla).
\end{equation*}
We will now focus our efforts in showing that the map $\omega_{\la}$ commutes with the Kashiwara operators $\fit$.
The $\eit$ part that needs to be checked may be done similarly.

Note that
\begin{equation*}
\vphi_i(T\otimes \tvla)
=\max\{\vphi_i(\tvla),\vphi_i(T)+\wt(\tvla)(h_i)\}
=\max\{0,\vphi_i(T)-\veps_i(\tvla)\}.
\end{equation*}
This implies
\begin{equation}\label{corr2}
\vphi_i(N)=\vphi_i(T\otimes \tvla)=\max\{0,\vphi_i(T)-\veps_i(\tvla)\}.
\end{equation}
Now, when $\fit N=0$, we have $\vphi_i(N)=0$ and so $\vphi_i(T) \leqslant \veps_i(\tvla)$ from~\eqref{corr2}, so that by the tensor product rule
\begin{equation*}
\fit (T\otimes \tvla) = T\otimes \fit(\tvla)=0.
\end{equation*}
When $\fit N \neq 0$, we must have $\vphi_i(N) > 0$.
From this and~\eqref{corr2}, we have
\begin{equation*}
\vphi_i(T)-\veps_i(\tvla) = \vphi_i(N) > 0,
\end{equation*}
and then the tensor product rule implies
\begin{equation*}
\fit(T\otimes \tvla) = \fit(T) \otimes \tvla.
\end{equation*}
On the other hand, we recall that when $\fit N \neq 0$, $\fit N=U_i(m)^{-1}N$ for some $m$.
Here, $1\leqslant m\leqslant 3$ when $i=1$, and $0\leqslant m\leqslant 2$ when $i=2$.
In each $i$ and $m$ case, $\tilde f_i$ acts on the $w$-box in the $v$th row of~$T$, as given below:
\begin{center}
      \begin{tabular}{c||c c c|c c c}
      $i$  &     & $1$         &           &     & $2$ &         \\
      \hline
      $m$  & $1$ & $2$         & $3$       & $0$ & $1$ & $2$     \\
      \hline
      \hline
      $v$  & $1$ & $1$         & $1$       & $2$ & $1$ & $1$\\
      \hline
      $w$  & $1$ & $3/0$       & $\bar{2}$ & $2$ & $2$ & $\bar{3}$\\
      \hline
      $w'$ & $2$ & $0/\bar{3}$ & $\bar{1}$ & $3$ & $3$ & $\bar{2}$
      \end{tabular}
\end{center}
In any case, the only (effective) difference between $T$ and $\fit T$ is that the combined number of all boxes labeled $w'$ through $\bar{1}$, appearing on their $v$th rows are $\ut_{v,w'}$ and $(\ut_{v,w'}+1)$, respectively, where $w'$ is as given above for each possible case.
Since we already know through Lemma~\ref{lem} that $\fit T$ belongs to~$\SML^{\la}$, $\fit(T) \otimes \tvla$ must be the image of $U_i(m)^{-1}N$ under $\omega_{\la}$.
We have shown that the map $\omega_{\la}$ commutes with the Kashiwara operator~$\fit$.
\end{proof}

Finally, we comment on another result concerning the description of $\Bla$ for the $G_2$ type.
Nakashima provided explicit descriptions of $\Bla$, for all rank two Lie algebra types, using the polyhedral realization approach~\cite{Naka}.
We will not provide any details, but it can be verified that when his results are specialized to the $G_2$~type, a resulting description of $\Bla$ is in natural correspondence with our description.


\end{document}